\documentclass[12pt, reqno, twoside, letterpaper]{amsart}

\usepackage{paperstyle}
\usepackage{graphicx}

\usepackage{mathtools}
\usepackage{todonotes}
\usepackage[norefs,nocites]{refcheck}

\newcommand{\be}{\begin{equation}}
\newcommand{\ee}{\end{equation}}
\newcommand{\dalign}[1]{\[\begin{aligned} #1 \end{aligned}\]}

\newcommand{\euB}{\EuScript{B}}

\newcommand{\euZ}{\EuScript{Z}}
\newcommand{\er}{\mathrm{e}}


\title[GRH from zeros of $\zeta(s)$]
{The Generalized Riemann Hypothesis\\ from zeros of
the zeta function}
      
\author[W.~Banks]{William Banks}

\address{Department of Mathematics, 
         University of Missouri, 
         Columbia MO 65211, USA.}

\email{bankswd@missouri.edu}
        
\date{\today}

\begin{document}

\begin{abstract}
We show that the Generalized Riemann Hypothesis for
Dirichlet $L$-functions is a consequence of certain
conjectural properties of the zeros of the Riemann
zeta function. Conversely, we prove that the zeros of $\zeta(s)$
satisfy those properties under GRH.
\end{abstract}

\thanks{MSC Numbers: Primary: 11M06, 11M26; Secondary: 11M20.}

\thanks{Keywords: Generalized Riemann Hypothesis, Riemann zeta function, Linnik–Sprind\v zuk.}

\thanks{Keywords: Riemann zeta function.}

\thanks{Data Availability Statement: Data sharing not applicable to this article as no datasets
were generated or analysed during the current study.}

\thanks{Potential Conflicts of Interest: NONE}

\thanks{Research Involving Human Participants and/or Animals: NONE}

\maketitle



\centerline{\it Dedicated to John Friedlander and Henryk Iwaniec}

\section{Introduction}
\label{sec:intro}

In a recent paper \cite{Banks}, the author has shown that
all Dirichlet $L$-functions $L(s,\chi)$ are nonvanishing
in the half-plane $\sigma>\tfrac{9}{10}$ if and only if
the zeros of the Riemann zeta function $\zeta(s)$ satisfy
certain distributional properties, including the classical
Riemann Hypothesis (RH). In the present paper, we extend that
result to show that the full Generalized Riemann
Hypothesis (GRH) holds if and only if the zeros of $\zeta(s)$
satisfy similar criteria.

To formulate our main result, we introduce some notation.
Throughout the paper, let $C_c^\infty(\R^+)$ be the space of smooth
functions $\euB:\R^+\to\C$ with compact support in $\R^+$.
Let $\mu$ and $\phi$ be the M\"obius and Euler functions, respectively. 
Also, let us recall the functional equation
$\zeta(s)=\cX(s)\zeta(1-s)$, where
$$
\cX(1-s)=\cX(s)^{-1}
=\pi^{\frac12-s}\frac{\Gamma(\frac12s)}{\Gamma(\frac12-\frac12s)}
=2(2\pi)^{-s}\Gamma(s)\cos\tfrac12\pi s.
$$

\begin{theorem}\label{thm:RHvsGRH}
The following statements are equivalent:
\begin{itemize}
\item[$(i)$] The Generalized Riemann Hypothesis holds for all Dirichlet $L$-functions;
\item[$(ii)$] The Riemann Hypothesis is true, and
for any rational number $\xi\defeq m/q$ with $0<m<q$
and $(m,q)=1$, any $\euB\in C_c^\infty(\R^+)$, and $\eps>0$, we have
\be\label{eq:superbound}
\sum_{\rho=\frac12+i\gamma}\xi^{-\rho}\,\cX(1-\rho)
\euB\Big(\frac{\gamma}{2\pi X}\Big)
+\frac{\mu(q)}{\phi(q)}C_\euB X
\,\mathop{\ll}\limits_{\xi,\euB,\eps}\,X^{1/2+\eps},
\ee
where the sum on the left runs over all complex zeros
$\rho=\frac12+i\gamma$ of $\zeta(s)$ $($counted with multiplicity$)$,
$C_\euB\defeq\int_0^\infty\euB(u)\,du$,
and the implied constant depends only on
$\xi$, $\euB$, and $\eps$.
\end{itemize}
\end{theorem}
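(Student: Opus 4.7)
The plan is to establish a contour-integral bridge between the zero sum on the left of \eqref{eq:superbound} and a twisted von Mangoldt sum
\[
T(X,\xi,\Phi):=\sum_n\Lambda(n)e(n\xi)\Phi(n/X),
\]
for a test function $\Phi$ built from $\euB$; once the bridge is in place, (i)$\Leftrightarrow$(ii) reduces to the classical equivalence between GRH and the asymptotic $T(X,\xi,\Phi)=\frac{\mu(q)}{\phi(q)}X\int_0^\infty\Phi+O(X^{1/2+\eps})$.

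I would construct the bridge using the Mellin identity
\[
\cX(1-s)=2\int_0^\infty u^{-s}\cos(2\pi u)\,du\qquad (0<\operatorname{Re}(s)<1),
\]
which makes $\xi^{-s}\cX(1-s)$ a natural weight to pair against $\zeta'(s)/\zeta(s)=-\sum_n\Lambda(n)n^{-s}$. Consider
\[
\mathcal{I}(X)=\frac{1}{2\pi i}\int_{(c)}\xi^{-s}\,\cX(1-s)\,\frac{\zeta'(s)}{\zeta(s)}\,H(s,X)\,ds,\qquad c>1,
\]
where $H(s,X)$ is an entire function, built from the entire Mellin transform $\widetilde\euB$, satisfying $H(\tfrac12+i\gamma,X)=\euB(\gamma/(2\pi X))$ for $\gamma>0$. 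Expanding $\zeta'/\zeta$ as a Dirichlet series on $(c)$ identifies $\mathcal{I}(X)$ with $T(X,\xi,\Phi)$ (up to explicit negligible terms from $(n,q)>1$); shifting the contour to $\operatorname{Re}(s)=-\delta$ picks up residues $\xi^{-\rho}\cX(1-\rho)\euB(\gamma/(2\pi X))$ at every non-trivial zero $\rho$, a vanishing residue at $s=1$ (since the simple zero of $\cX(1-s)$ cancels the pole of $\zeta'/\zeta$), and $O(1)$-contributions at $s=0$ and at trivial zeros that are swallowed by the final error term.

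For (i)$\Rightarrow$(ii), I would evaluate $T(X,\xi,\Phi)$ by the character decomposition $e(nm/q)=\phi(q)^{-1}\sum_\chi\bar\chi(m)\tau(\chi)\bar\chi(n)$ on $(n,q)=1$, apply the explicit formula to each $L(s,\chi)$, and extract the main term $\frac{\mu(q)}{\phi(q)}C_\euB X$ from the principal character's pole at $s=1$ (using $\tau(\chi_0)=\mu(q)$); GRH bounds the remaining zero contributions by $O(X^{1/2+\eps})$, and substitution through the bridge delivers \eqref{eq:superbound}. For (ii)$\Rightarrow$(i), I would read the bridge in reverse: the hypothesis yields the bound on $T(X,\xi,\Phi)$ for a sufficiently rich family of $\Phi$, orthogonality of characters in $m$ then isolates $\sum_n\Lambda(n)\chi(n)\Phi(n/X)\ll X^{1/2+\eps}$ for every primitive $\chi$, and a Mellin-inversion/Tauberian argument of the type developed in \cite{Banks} rules out any off-critical-line zero of $L(s,\chi)$. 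The principal obstacle is the construction of $H(s,X)$: since $\euB\in C_c^\infty(\R^+)$ is only $C^\infty$ and not real-analytic, no elementary formula produces an entire $H$ matching $\euB(\gamma/(2\pi X))$ on the critical line. One must instead represent $H$ by a Mellin contour integral in an auxiliary variable $w$ involving $\widetilde\euB(w)$ with a carefully chosen branch of $((s-\tfrac12)/(2\pi iX))^{-w}$, and balance the Stirling growth $|\cX(1-s)|\asymp|t|^{\sigma-1/2}$ against the rapid decay of $\widetilde\euB$ on vertical lines. Verifying that the contour shifts are uniformly justified, that the residues at $s=0$ and trivial zeros are genuinely lower-order, and that the leading coefficient is precisely $-\frac{\mu(q)}{\phi(q)}C_\euB$ with an $\eps$-tight error, is where the bulk of the technical work lies.
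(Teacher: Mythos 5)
Your overall architecture --- a bridge linking the zero sum $\sum_\rho\xi^{-\rho}\cX(1-\rho)\euB(\gamma/2\pi X)$ to the twisted sum $\sum_n\Lambda(n)\e(-n\xi)\euB(n/X)$, then Gauss-sum orthogonality in $m$ and the reductions from \cite{Banks} --- matches the paper's. The gap is in how you build the bridge. You place the test weight inside the contour integral and require an entire $H(s,X)$ with $H(\tfrac12+i\gamma,X)=\euB(\gamma/(2\pi X))$; as you yourself note, no such entire function can exist (the restriction of an entire function to the critical line is real-analytic, hence cannot agree there with a nonzero compactly supported $\euB$), and your proposed substitute --- a Mellin integral in an auxiliary variable $w$ against a branch of $((s-\tfrac12)/(2\pi iX))^{-w}$ --- is left unexecuted and is genuinely problematic: the branch point sits at $s=\tfrac12$, on the very line carrying the zeros whose residues you must collect, the shift from $\Re s=c>1$ to $\Re s=-\delta$ crosses the cut, and the interchange of the $w$-integral with the shift against the growth $|\cX(1-s)|\asymp|t|^{\sigma-1/2}$ is exactly the content of the theorem, not a routine verification. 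A smaller slip: your motivating identity should be $2\int_0^\infty u^{-s}\cos(2\pi u)\,du=\cX(s)$, i.e.\ $\cX(1-s)=2\int_0^\infty u^{s-1}\cos(2\pi u)\,du$ for $0<\Re s<1$, and in any case it diverges on the line $\Re s=c>1$ where you expand $\zeta'/\zeta$; what is actually needed is the truncated stationary-phase evaluation of $\frac{1}{2\pi i}\int_{c+i}^{c+iT}r^{-s}\cX(1-s)\,ds$ due to Gonek \cite{Gonek}, in the form of \cite[Lem.~1]{ConGhoGon}.

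The interpolation problem disappears if you smooth \emph{after}, not before, the contour integration, which is the paper's route. One first proves the unconditional sharp-cutoff estimate $\sum_{0<\gamma\le T}\xi^{-\rho}\cX(1-\rho)+\sum_{n\le T/2\pi\xi}\Lambda(n)\e(-n\xi)\ll_\xi T^{1/2}\log^2T$ by integrating $\frac{\zeta'}{\zeta}(s)\,\xi^{-s}\cX(1-s)$ around the rectangle with corners $c+i$, $c+iT$, $-1+iT$, $-1+i$, choosing $T$ so that $\frac{\zeta'}{\zeta}(\sigma+iT)\ll\log^2T$, applying Gonek's lemma term by term on the right edge and crude bounds for $\cX$ and $\zeta'/\zeta$ on the other three sides; the weight $\euB$ is then inserted by Riemann--Stieltjes partial summation, which works for any $C_c^\infty$ weight with no analyticity whatsoever. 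With that corollary in hand, your two implications do go through essentially as you sketch them: under GRH the twisted sum equals $\frac{\mu(q)}{\phi(q)}C_\euB X+O(X^{1/2+\eps})$ (this is \cite[Lem.~5.2]{Banks} together with the RH estimate for $\psi(x)$), and conversely the bound transfers back through the bridge, orthogonality kills the main term for non-principal $\chi$, and \cite[Lem.~5.1]{Banks} upgrades the resulting bounds to GRH. As written, however, your proposal defers the one step that carries the real analytic content, so it does not yet constitute a proof.
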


We emphasize that statement $(ii)$ involves
only properties of the Riemann zeta function. In particular,
\emph{no conditions are placed on the zeros
of \text{non-principal} Dirichlet \text{$L$-functions}}.

Theorem~\ref{thm:RHvsGRH} is closely related to an
earlier result of Sprind\v zuk~\cite{Sprind1,Sprind2},
which he obtained by developing ideas of Linnik~\cite{Linnik}.
Under RH, Sprind\v zuk showed that GRH is equivalent to the
assertion that, for any rational number $\xi\defeq m/q$ with
$0<|m|\le q/2$ and $(m,q)=1$, and any $\eps>0$, one has
\be\label{eq:superbound2}
\sum_{\rho=\frac12+i\gamma}|\gamma|^{i\gamma}
\er^{-i\gamma-\pi|\gamma|/2}(x+2\pi i\xi)^{-\rho}
+\frac{\mu(q)}{\phi(q)}\frac{1}{x\sqrt{2\pi}}
\,\mathop{\ll}\limits_{\xi,\eps}\,x^{-1/2-\eps}
\ee
as $x$ tends to $0^+$. The Linnik–Sprind\v zuk theorem says
(essentially) that the GRH is equivalent to RH together
with a suitable property about the vertical
distribution of the zeros of $\zeta(s)$, hence the zeros of
$\zeta(s)$ contain information about the zeros of 
$L(s,\chi)$, and vice versa. Using various methods,
similar results in this direction have been attained by
Fujii~\cite{Fujii1,Fujii2,Fujii3},
Suzuki~\cite{Suzuki}, and Kaczorowski and Perelli~\cite{KacPer}.
Theorem~\ref{thm:RHvsGRH} provides another instance of this idea.

We remark that, in place of \eqref{eq:superbound},
Banks \cite[Thm.~1.1]{Banks} gives the weaker
bound\footnote{Here, we correct a typo in the statement of
\cite[Thm.~1.1]{Banks},
where $\xi^{-i\gamma}$ appears instead of $\xi^{-\rho}$.}
$$
\sum_{\rho=\frac12+i\gamma}\xi^{-\rho}\,\euZ(\rho)
\euB\Big(\frac{\gamma}{2\pi X}\Big)
+\frac{\mu(q)}{\phi(q)}\sum_n\Lambda(n)\euB(n/X)
\,\mathop{\ll}\limits_{\xi,\euB,\eps}\,X^{9/10+\eps}.
$$
One can easily verify that each term $\euZ(\rho)$
(as defined in \cite{Banks}) is equal to $\cX(1-\rho)$.
Moreover, we show that
$\sum_n\Lambda(n)\euB(n/X)$ is equal to $C_\euB X$ up to
an acceptable error; see Lemma~\ref{lem:blue}.
Thus, statement $(ii)$ in Theorem~\ref{thm:RHvsGRH} 
is consistent the results of \cite{Banks}.

\section{Preliminaries}

\subsection{Notation}

We write $\e(u)\defeq \er^{2\pi iu}$
for all $u\in\R$. As in  Theorem~\ref{thm:RHvsGRH}, $C_\euB$
is used to denote the constant $\int_0^\infty\euB(u)\,du$.
 
Throughout the paper, any implied constants in the
symbols $\ll$, $O$, etc., may depend on various parameters
(e.g., $\xi$, $\euB$, and $\eps$), and this is
indicated by the notation (see, e.g., \eqref{eq:superbound});
such constants are independent of all other parameters.

\subsection{The function $\cX(s)$}

For fixed $\delta>0$, Stirling's formula for the gamma function,
namely
$$
\Gamma(z)=\sqrt{2\pi}\,z^{z-1/2}\er^{-z}
\big\{1+O_\delta(|z|^{-1})\big\},
$$
holds uniformly for all complex $z$ satisfying
$|z|\ge\delta$ and $|{\rm arg}~z|<\pi-\delta$;
see, for example, Montgomery and Vaughan~\cite[Thm.~C.1]{MontVau}.
From this, it is not difficult to derive the estimates
\be\label{eq:X1-s}
\cX(1-s)=\er^{-i\pi/4}\Big(\frac{t}{2\pi}\Big)^{\sigma-1/2}
\exp\Big(it\log\Big(\frac{t}{2\pi\er}\Big)\Big)
\big\{1+O_\cI(t^{-1})\big\}
\ee
and
\be\label{eq:cXs}
\cX(\overline{s})=\er^{-i\pi/4}\Big(\frac{t}{2\pi}\Big)^{1/2-\sigma}
\exp\Big(it\log\Big(\frac{t}{2\pi\er}\Big)\Big)
\big\{1+O_\cI(t^{-1})\big\}
\ee
uniformly for all $\sigma$ in a given bounded interval $\cI$
and all $t\ge 1$. The following lemma
is due to Gonek~\cite[Lem.~2]{Gonek}; the proof uses
\eqref{eq:X1-s} and the stationary phase method.
The formulation given here is a special case 
of Conrey, Ghosh and Gonek~\cite[Lem.~1]{ConGhoGon}.

\begin{lemma}\label{lem:ConGhoGon}
For any $r>0$ and $\tfrac1{100}\le c\le 2$, we have
$$
\frac{1}{2\pi i}\int_{c+i}^{c+iT}r^{-s}\cX(1-s)\,ds=\begin{cases}
\e(-r)+E(r,c)r^{-c}&\quad\hbox{if $r\le T/2\pi$},\\
E(r,c)r^{-c}&\quad\hbox{otherwise},\\
\end{cases}
$$
where
$$
E(r,c)\ll T^{c-1/2}+\frac{T^{c+1/2}}{|T-2\pi r|+T^{1/2}}.
$$
\end{lemma}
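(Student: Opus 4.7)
The plan is to substitute the Stirling asymptotic \eqref{eq:X1-s} for $\cX(1-s)$ and analyze the resulting exponential integral by the method of stationary phase. Parametrizing the contour as $s=c+it$ with $t\in[1,T]$, the integral becomes
\[
\frac{\er^{-i\pi/4}\,r^{-c}}{2\pi}\int_1^T \Big(\frac{t}{2\pi}\Big)^{c-1/2}\exp\big(i\psi(t)\big)\big\{1+O_c(t^{-1})\big\}\,dt,
\]
where $\psi(t)\defeq t\log\!\big(t/(2\pi r)\big)-t$. Since $\psi'(t)=\log(t/(2\pi r))$ and $\psi''(t)=1/t$, the phase has a unique non-degenerate stationary point at $t_0\defeq 2\pi r$, with $\psi''(t_0)=1/(2\pi r)>0$ and $\psi(t_0)=-2\pi r$. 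The case split of the lemma reflects whether $t_0\le T$ (stationary point inside the range of integration, main term retained) or $t_0>T$ (no main term).

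Assuming $r\le T/(2\pi)$, I would apply the standard stationary phase formula to a smoothly localized piece of the integral near $t_0$. Computing
\[
\sqrt{\frac{2\pi}{\psi''(t_0)}}=2\pi\sqrt{r}\qquad\text{and}\qquad \Big(\frac{t_0}{2\pi}\Big)^{c-1/2}=r^{c-1/2},
\]
and noting that the factor $\er^{-i\pi/4}$ from Stirling cancels the factor $\er^{i\pi/4}$ produced by the Gaussian integral (since $\psi''(t_0)>0$), the leading contribution works out to
\[
\frac{r^{-c}}{2\pi}\cdot r^{c-1/2}\cdot 2\pi\sqrt{r}\cdot \er^{-2\pi ir}\;=\;\er^{-2\pi ir}\;=\;\e(-r),
\]
exactly matching the claimed main term.

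For the remaining range of $t$, I would integrate by parts using $\er^{i\psi}=(i\psi')^{-1}\tfrac{d}{dt}\er^{i\psi}$, each application gaining a factor $1/|\psi'(t)|=1/|\log(t/(2\pi r))|$. This produces a boundary contribution at $t=T$ of size
\[
\frac{T^{c-1/2}}{|\log(T/(2\pi r))|}\;\asymp\;\frac{T^{c+1/2}}{|T-2\pi r|},
\]
which accounts for the second summand in $E(r,c)$. The $O(T^{c-1/2})$ term arises from the boundary at $t=1$, together with the Stirling remainder $O(t^{-1})$ integrated against the amplitude $t^{c-1/2}$.

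The main obstacle is the \emph{transition regime} $|T-2\pi r|\lesssim T^{1/2}$, in which the stationary point lies close to (or just past) the endpoint $T$ and neither stationary phase nor integration by parts is individually sharp. The resolution is that the stationary phase window has natural length $\sqrt{1/\psi''(t_0)}\asymp T^{1/2}$; truncating at $t=T$ introduces (in either case of the lemma) an error of size at most $T^{c-1/2}\cdot T^{1/2}=T^c$. One then checks that this uniform bound is absorbed into $T^{c+1/2}/(|T-2\pi r|+T^{1/2})$, so the dichotomy between the two cases of the lemma rests solely on the presence or absence of the main term $\e(-r)$.
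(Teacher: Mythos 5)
Your route---inserting \eqref{eq:X1-s} and running stationary phase---is precisely the method the paper points to: the paper does not prove this lemma at all, but quotes it from Gonek and Conrey--Ghosh--Gonek and remarks that the proof uses \eqref{eq:X1-s} and stationary phase. Your main-term computation is correct (the phase $\psi(t)=t\log(t/2\pi r)-t$, the stationary point $t_0=2\pi r$, and the factors $\tfrac{r^{-c}}{2\pi}\cdot r^{c-1/2}\cdot 2\pi\sqrt{r}$ do combine to give $\e(-r)$), and your discussion of the transition range $|T-2\pi r|\ll T^{1/2}$ correctly identifies where the $T^{1/2}$ in the denominator of $E(r,c)$ comes from.

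The gap is in the error accounting at the bottom of the contour, and it matters because the statement allows all $c\in[\tfrac1{100},2]$. You assert that the boundary term at $t=1$ and the integrated Stirling remainder are $O(T^{c-1/2})$. That is only true for $c>\tfrac12$: the Stirling remainder contributes $\ll\int_1^T t^{c-3/2}\,dt$, which is $\asymp T^{c-1/2}$ only when $c>\tfrac12$ (it is $\asymp\log T$ at $c=\tfrac12$ and $O(1)$ below), the first integration-by-parts boundary term at $t=1$ contributes to $E(r,c)$ an amount of order $1/|\log(2\pi r)|$, and for bounded $t$ the asymptotic \eqref{eq:X1-s} is accurate only up to a factor $1+O(1)$, so that initial segment must be bounded trivially and also gives an $O(1)$ contribution to $E(r,c)$. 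None of these is $\ll T^{c-1/2}$ when $c<\tfrac12$ and $T$ is large; indeed, letting $T\to\infty$ with $r$ and $c<\tfrac12$ fixed, the left-hand side converges to $\e(-r)$ plus a $T$-independent, $c$-dependent quantity of size comparable to $r^{-c}$ that is in general nonzero, so some $O(1)$ term in $E$ is unavoidable in that range. As written, then, your argument establishes the lemma only for $c>\tfrac12$; to cover the stated range you must either restrict $c$, allow an extra $O(1)$ in $E(r,c)$, or treat the neighborhood of $t=1$ separately---a harmless repair for this paper, which applies the lemma only with $c=1+1/\log T$. A smaller point: your step $T^{c-1/2}/|\log(T/2\pi r)|\asymp T^{c+1/2}/|T-2\pi r|$ is valid only when $2\pi r\asymp T$; elsewhere the left-hand side is simply $\ll T^{c-1/2}$ and should be absorbed into the first summand.
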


\subsection{Reformulation of GRH}

For a given Dirichlet character $\chi\bmod q$,
its Dirichlet \text{$L$-function} is defined 
for $\sigma>1$ by
$$
L(s,\chi)\defeq\sum_{n\ge 1} \chi(n)n^{-s}
=\prod_{p\text{~prime}}(1-\chi(p)p^{-s})^{-1}.
$$
Consider the following hypothesis about the zeros of $L(s,\chi)$:

\bigskip\noindent{\sc Hypothesis ${\rm GRH}[\chi]$}:
{\it If $L(\beta+i\gamma,\chi)=0$ and $\beta>0$, then $\beta=\tfrac12$.}

\bigskip\noindent The Generalized Riemann Hypothesis is the
assertion that ${\rm GRH}[\chi]$ holds for all
characters $\chi$. Note that for a principal
character $\chi_0\bmod q$, we have
$$
L(s,\chi_0)=\zeta(s)\prod_{p\,\mid\,q}(1-p^{-s}),
$$
hence ${\rm GRH}[\chi_0]$ is equivalent to RH. Similarly, if
$\chi$ is induced from a primitive character~$\chi_*$
of conductor $q_*>1$, then $L(s,\chi)$ and $L(s,\chi_*)$ have the same zeros
in the critical strip since
$$
L(s,\chi)=L(s,\chi_*)
\sprod{p\,\mid\,q\\p\,\nmid\,q_*}(1-\chi_*(p)p^{-s}),
$$
and therefore ${\rm GRH}[\chi]$ is equivalent to ${\rm GRH}[\chi_*]$.
From these remarks, and taking into account 
the special case $\beta_0\defeq\tfrac12$ of \cite[Lem.~5.1]{Banks},
the next result is immediate.

\begin{lemma}\label{lem:ultraclean}
The following statements are equivalent:
\begin{itemize}
\item[$(i)$] The Generalized Riemann Hypothesis holds for all Dirichlet $L$-functions;
\item[$(ii)$] The Riemann Hypothesis is true, and
for every primitive character $\chi$ of modulus $q>1$,
any $\euB\in C_c^\infty(\R^+)$, and $\eps>0$, we have
$$
\sum_n\Lambda(n)\chi(n)\euB(n/X)
\,\mathop{\ll}\limits_{q,\euB,\eps}\, X^{1/2+\eps}.
$$
\end{itemize}
\end{lemma}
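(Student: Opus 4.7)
The plan is essentially bookkeeping: assemble the three remarks made immediately before the statement (on principal characters, on characters induced from primitives, and on the identification of ${\rm GRH}[\chi]$ with a smoothed prime-sum bound) into a single chain of equivalences. I do not anticipate any genuine difficulty, since the analytic content is packaged into the cited \cite[Lem.~5.1]{Banks}.

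First I would reduce GRH to a statement about primitive characters of modulus $q>1$. The principal characters contribute nothing beyond ${\rm GRH}[\chi_0]\Leftrightarrow\text{RH}$, via the factorization $L(s,\chi_0)=\zeta(s)\prod_{p\mid q}(1-p^{-s})$ noted in the preamble. For a non-principal $\chi$ induced from a primitive $\chi_*$ of conductor $q_*>1$, the corresponding factorization of $L(s,\chi)$ in terms of $L(s,\chi_*)$ shows that the two $L$-functions have identical zeros in the critical strip, so ${\rm GRH}[\chi]\Leftrightarrow{\rm GRH}[\chi_*]$. Combining, GRH is equivalent to RH together with ${\rm GRH}[\chi]$ for every primitive character $\chi$ of modulus $q>1$.

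Second, I would apply the special case $\beta_0=\tfrac12$ of \cite[Lem.~5.1]{Banks} to each such primitive $\chi$. That lemma supplies precisely the equivalence between ${\rm GRH}[\chi]$ and the sharp smoothed bound
$$
\sum_n\Lambda(n)\chi(n)\euB(n/X)\,\mathop{\ll}\limits_{q,\euB,\eps}\,X^{1/2+\eps}
$$
valid for every $\euB\in C_c^\infty(\R^+)$ and every $\eps>0$. Inserting this equivalence into the reduction of the previous paragraph yields $(i)\Leftrightarrow(ii)$ directly.

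The only point that requires care is to confirm that \cite[Lem.~5.1]{Banks} is indeed formulated for an arbitrary primitive character of modulus $q>1$, and that its statement at the boundary value $\beta_0=\tfrac12$ matches condition (ii) without any hidden extra hypothesis (for instance, no assumption on zeros of principal $L$-functions has to creep in). Granting this, the lemma follows with no further analytic input, which accords with the author's comment that the result is ``immediate''.
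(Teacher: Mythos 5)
Your proposal matches the paper's argument: the paper likewise combines the two preceding remarks (principal characters give RH, imprimitive characters reduce to their primitive inducers) with the special case $\beta_0=\tfrac12$ of \cite[Lem.~5.1]{Banks} and declares the equivalence immediate. No gaps; your extra caution about the exact formulation of the cited lemma is reasonable but does not change the route.
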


\subsection{Initial estimates}

\begin{lemma}\label{lem:blue}
Assume \text{\rm RH}.
For every $\euB\in C_c^\infty(\R^+)$, we have
$$
\sum_n\Lambda(n)\euB(n/X)=C_\euB X+O_\euB(X^{1/2}\log^2X).
$$
\end{lemma}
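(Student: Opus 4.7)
\emph{Proof plan.}
The plan is to reduce the claim to the classical conditional Prime Number Theorem via partial summation. Under RH, von Koch's bound gives
$$\psi(x)\defeq\sum_{n\le x}\Lambda(n)=x+O\bigl(x^{1/2}\log^2 x\bigr)\qquad(x\ge 2),$$
a standard result found, e.g., in Montgomery--Vaughan~\cite{MontVau}.

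Since $\euB\in C_c^\infty(\R^+)$ is supported on a compact subset of $(0,\infty)$, I will express the smoothed sum as a Stieltjes integral against $\psi$, integrate by parts (all boundary terms vanish by compact support, and $\psi(0)=0$), and substitute $u=x/X$ to obtain
$$\sum_n\Lambda(n)\,\euB(n/X)=-\int_0^\infty\psi(Xu)\,\euB'(u)\,du.$$
Inserting the von Koch asymptotic splits this integral into a main term $-X\int_0^\infty u\,\euB'(u)\,du$ plus an error of size $O_\euB(X^{1/2}\log^2 X)$; the error estimate uses only that $\euB'$ is bounded and supported in a compact subset of $(0,\infty)$, so that $\log^2(Xu)=O_\euB(\log^2 X)$ uniformly on the support and $u^{1/2}|\euB'(u)|$ is integrable. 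A final integration by parts converts the main term into
$$-X\int_0^\infty u\,\euB'(u)\,du=X\int_0^\infty\euB(u)\,du=C_\euB X,$$
which completes the proof.

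The argument is essentially routine: the nontrivial input is the off-the-shelf conditional bound on $\psi(x)$, while the integration-by-parts manipulations are legitimized by the smoothness and compact support of $\euB$. No step stands out as a real obstacle; the only mildly delicate point is the uniformity of $\log^2(Xu)=O_\euB(\log^2 X)$ on the support of $\euB'$, which is immediate.
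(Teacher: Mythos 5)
Your proof is correct and follows essentially the same route as the paper: both rest on the conditional bound $\psi(x)=x+O(x^{1/2}\log^2x)$ and a Riemann--Stieltjes integration by parts against $\euB$, differing only in whether the main term is extracted before or after the integration by parts. No issues.
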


\begin{proof}
Under RH, the Chebyshev function
$\psi(x)\defeq\sum_{n\le x}\Lambda(n)$ is known to satisfy
\be\label{eq:Chebest}
\psi(x)=x+E(x)\qquad\text{with}\quad
E(x)\ll x^{1/2}\log^2x;
\ee
see \cite[Thm.~13.1]{MontVau}.
Using Riemann-Stieltjes integration, we have
\dalign{
\sum_n\Lambda(n)\euB(n/X)&=\int_0^\infty\euB(u/X)\,d\psi(u)
=\int_0^\infty\euB(u)\,d\psi(Xu)\\
&=\int_0^\infty\euB(u)\,d(Xu)+\int_0^\infty\euB(u)\,dE(Xu)\\
&=C_\euB X-\int_0^\infty\euB'(u)E(Xu)\,du.
}
The bound in \eqref{eq:Chebest} implies that the last integral is
$O_\euB(X^{1/2}\log^2X)$, and the lemma is proved.
\end{proof}

\begin{lemma}\label{lem:green}
Assume GRH. For any rational number $\xi\defeq m/q$ with $0<m<q$
and $(m,q)=1$, $\euB\in C_c^\infty(\R^+)$, and $\eps>0$,
we have
$$
\sum_n\Lambda(n)\e(-n\xi)\euB(n/X)
=\frac{\mu(q)}{\phi(q)}C_\euB X
+O_{\xi,\euB,\eps}(X^{1/2+\eps})
$$
\end{lemma}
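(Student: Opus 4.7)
The plan is to decouple the additive twist $\e(-n\xi)$ from $\Lambda(n)$ by passing to Dirichlet characters modulo $q$, after which the GRH bound supplied by Lemma~\ref{lem:ultraclean} can be applied term by term. First I would isolate the contribution from $n$ with $(n,q)>1$: the only surviving terms are prime powers $p^k$ with $p\mid q$, so there are $O_{q,\euB}(\log X)$ of them and they contribute $O_{q,\euB}(\log^2 X)$, which is absorbed into the error.

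For $(n,q)=1$, orthogonality of the characters modulo $q$ yields the standard identity
$$
\e(-nm/q)=\frac{1}{\phi(q)}\sum_{\chi\bmod q}\bar\chi(n)\,\tau_m(\chi),
\qquad \tau_m(\chi)\defeq\sum_{a\bmod q}\chi(a)\,\e(-am/q),
$$
which I would substitute and reorder summations to get
$$
\sum_n\Lambda(n)\,\e(-n\xi)\,\euB(n/X)
=\frac{1}{\phi(q)}\sum_{\chi\bmod q}\tau_m(\chi)\sum_n\Lambda(n)\,\bar\chi(n)\,\euB(n/X)+O_{q,\euB}(\log^2 X).
$$
The principal character contributes the Ramanujan sum $\tau_m(\chi_0)=c_q(m)=\mu(q)$ (using $(m,q)=1$), and its inner sum equals $\sum_n\Lambda(n)\,\euB(n/X)$ up to a further $O_{q,\euB}(\log^2 X)$ correction; Lemma~\ref{lem:blue} then delivers the main term $\mu(q)C_\euB X/\phi(q)$ with acceptable error.

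It remains to show that every non-principal $\chi\bmod q$ contributes $O_{\xi,\euB,\eps}(X^{1/2+\eps})$. The factor $\tau_m(\chi)$ is bounded trivially by $q$, so the task reduces to bounding the inner sum. Let $\chi_\ast$ of conductor $q_\ast\mid q$ be the primitive character inducing $\chi$; since $\chi$ is non-principal, $q_\ast>1$, and
$$
\sum_n\Lambda(n)\,\bar\chi(n)\,\euB(n/X)=\sum_n\Lambda(n)\,\overline{\chi_\ast}(n)\,\euB(n/X)+O_{q,\euB}(\log^2 X).
$$
Applying Lemma~\ref{lem:ultraclean} to $\overline{\chi_\ast}$ (under GRH) bounds the first sum by $O_{q,\euB,\eps}(X^{1/2+\eps})$, and summing the resulting estimate over the $\phi(q)-1$ non-principal characters yields the claimed error. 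The proof is largely bookkeeping; the only mild subtlety is the descent to the primitive companion $\chi_\ast$ needed to invoke Lemma~\ref{lem:ultraclean}, and the main term reassembles cleanly thanks to the evaluation $c_q(m)=\mu(q)$.
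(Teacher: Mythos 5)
Your argument is correct, but it is a genuinely different route from the one taken in the paper. The paper proves this lemma in one line, by quoting \cite[Lem.~5.2]{Banks} (which already asserts that the additively twisted sum equals $\frac{\mu(q)}{\phi(q)}\sum_n\Lambda(n)\euB(n/X)$ up to an acceptable error under the relevant nonvanishing hypothesis) and then using Lemma~\ref{lem:blue} to replace $\sum_n\Lambda(n)\euB(n/X)$ by $C_\euB X$. You instead rebuild that external lemma from scratch inside the present paper: expanding $\e(-nm/q)$ over the characters mod $q$, evaluating the principal-character coefficient as the Ramanujan sum $c_q(m)=\mu(q)$ (which, with Lemma~\ref{lem:blue}, gives exactly the stated main term), discarding the $(n,q)>1$ terms and the non-primitive/primitive discrepancy at a harmless $O_{q,\euB}(\log^2X)$ cost, and bounding each non-principal contribution by the forward direction of Lemma~\ref{lem:ultraclean} applied to the primitive character $\overline{\chi_*}$ of conductor $q_*>1$; all of these steps check out, and there is no circularity, since Lemma~\ref{lem:ultraclean} is established independently of Lemma~\ref{lem:green}. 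Your computation is essentially the dual of the one the paper performs in Section~4 when proving $(ii)\Longrightarrow(i)$, where \eqref{eq:chi-basic} converts multiplicative twists to additive ones; you run the inverse identity. What each approach buys: the paper's citation is maximally short and keeps the analytic work in \cite{Banks}; your version is self-contained within this paper's stated lemmas, at the cost of a page of bookkeeping, though note that the GRH$\Rightarrow$bound direction of Lemma~\ref{lem:ultraclean} that you rely on is itself only obtained in the paper by citation to \cite[Lem.~5.1]{Banks}, so the gain in self-containedness is partial rather than complete.
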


\begin{proof}
In view of Lemma~\ref{lem:blue},
this is a consequence of \cite[Lem.~5.2]{Banks}.
\end{proof}

\section{Twisting the von Mangoldt function}
\label{sec:vonMangoldt-twist}

\begin{theorem}\label{thm:vonMangoldt-twist}
For any $\xi\in\R^+$ and $T\ge 100$, we have
$$
\ssum{\rho=\beta+i\gamma\\0<\gamma\le T}\xi^{-\rho}\cX(1-\rho)
+\sum_{n\le T/2\pi\xi}\Lambda(n)\e(-n\xi)
\,\mathop{\ll}\limits_\xi\,T^{1/2}\log^2T.
$$
where the first sum runs over complex zeros
$\rho=\beta+i\gamma$ of $\zeta(s)$ $($counted with multiplicity$)$,
and the implied constant depends only on $\xi$.
\end{theorem}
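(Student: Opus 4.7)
The plan is to relate the two sums through a contour integral of $\zeta'(s)/\zeta(s)\cdot\xi^{-s}\cX(1-s)$ around the rectangle with vertices $c+i$, $c+iT$, $-1+iT$, $-1+i$, where $c\defeq 1+1/\log T$. Since $\zeta(s)$ has no zeros with $0<\gamma\le 1$, the interior of this rectangle contains exactly the non-trivial zeros with $0<\gamma\le T$, and the residue theorem equates the contour integral with $2\pi i$ times the sum $\sum_\rho\xi^{-\rho}\cX(1-\rho)$ appearing in the theorem; the pole of $\zeta'/\zeta$ at $s=1$ and the pole of $\cX(1-s)$ at $s=0$ both lie on $\Im s=0$ and are therefore outside the rectangle.

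On the right-hand vertical side I would expand $\zeta'(s)/\zeta(s)=-\sum_n\Lambda(n)n^{-s}$, interchange sum and integral, and apply Lemma~\ref{lem:ConGhoGon} with $r=n\xi$. The main terms yield $-\sum_{n\le T/2\pi\xi}\Lambda(n)\e(-n\xi)$, while the residual error from $E(n\xi,c)(n\xi)^{-c}$ is controlled by splitting the sum at $N_0\defeq T/2\pi\xi$: when $n$ is well separated from $N_0$ the denominator $|T-2\pi n\xi|+T^{1/2}$ in $E(n\xi,c)$ is comparable to $T$, while for $n$ close to $N_0$ the Chebyshev bound $\sum_{n\le x}\Lambda(n)\ll x$ combined with partial summation suffices. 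The net contribution is $O_\xi(T^{1/2}\log T)$.

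The remaining three sides of the rectangle are bounded directly. The bottom at $\Im s=1$ is trivially $O_\xi(1)$; the left at $\Re s=-1$ is also $O_\xi(1)$ because \eqref{eq:X1-s} gives $\cX(1-s)\ll t^{-3/2}$ there while the functional equation gives $\zeta'(s)/\zeta(s)\ll\log t$, so the integrand decays like $\log t/t^{3/2}$. The top at $\Im s=T$ is the critical segment: invoking the classical bound $\zeta'(\sigma+iT)/\zeta(\sigma+iT)\ll\log^2 T$ uniform for $-1\le\sigma\le c$, together with $\cX(1-s)\ll T^{\sigma-1/2}$, the integrand is majorized by $\xi^{-\sigma}T^{\sigma-1/2}\log^2 T$, and integration in $\sigma$ yields $O_\xi(T^{1/2}\log T)$.

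The main obstacle is that this last bound requires $T$ to lie at distance $\gg 1/\log T$ from every ordinate $\gamma$, a condition that fails for some $T$. To accommodate this I would replace the given $T$ with a suitable $T^*\in[T,T+1]$, whose existence follows from $N(T+1)-N(T)\ll\log T$. This replacement introduces only $O(\log T)$ additional zeros, each of size $|\xi^{-\rho}\cX(1-\rho)|\ll_\xi T^{1/2}$ unconditionally (via $0\le\beta\le 1$ and \eqref{eq:X1-s}), plus $O_\xi(1)$ additional prime powers of size $O(\log T)$. All of these contributions are absorbed into the final bound $O_\xi(T^{1/2}\log^2 T)$ claimed in the theorem.
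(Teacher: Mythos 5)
Your proposal is correct and follows essentially the same route as the paper: the same rectangular contour for $(\zeta'/\zeta)(s)\,\xi^{-s}\cX(1-s)$ with $c=1+1/\log T$, Lemma~\ref{lem:ConGhoGon} on the right edge with $r=n\xi$, the $\log^2T$ horizontal bound on the top edge after shifting to a well-chosen $T^*\in[T,T+1]$, and the zero-density/prime-power accounting to pass back to arbitrary $T$. The only caveat is cosmetic: the error sum near $N_0$ (the range $T_\circ^{1/2}<|n-N_0|\le\tfrac12 N_0$) actually contributes $O_\xi(T^{1/2}\log^2T)$ rather than $O_\xi(T^{1/2}\log T)$, which is still absorbed by the stated bound.
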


\begin{proof}
For $u>1$, let
$$
\Sigma_1(u)\defeq\ssum{\rho=\beta+i\gamma\\0<\gamma\le u}
\xi^{-\rho}\cX(1-\rho),\qquad
\Sigma_2(u)\defeq\ssum{1<n\le u}\Lambda(n)\e(-n\xi).
$$
Then our goal is to show that
\be\label{eq:Sigsum}
\Sigma_1(T)+\Sigma_2(T/2\pi\xi)
\,\mathop{\ll}\limits_\xi\,T^{1/2}\log^2T.
\ee

According to \cite[Lemma~12.2]{MontVau},
for any real number $T_*\ge 2$, there exists $T\in[T_*,T_*+1]$ such that
\be\label{eq:LD-horiz}
\frac{\zeta'}{\zeta}(\sigma+iT)\ll\log^2T\qquad(-1\le\sigma\le 2).
\ee
In proving \eqref{eq:Sigsum}, we can assume 
that $T$ has this property. Indeed, let $T_*\ge 100$ be
arbitrary, and suppose $T\in[T_*,T_*+1]$ satisfies
\eqref{eq:Sigsum}.
By \eqref{eq:X1-s}, we have $\cX(1-\rho)\ll |\gamma|^{1/2}$
for all complex zeros $\rho=\beta+i\gamma$ of $\zeta(s)$,
and therefore
$$
\big|\Sigma_1(T_*)-\Sigma_1(T)\big|\le
\ssum{\rho=\beta+i\gamma\\T_*<\gamma\le T_*+1}
\big|\xi^{-\rho}\cX(1-\rho)\big|\,\mathop{\ll}\limits_\xi\,T_*^{1/2}
\log T_*
$$
since the number of zeros
$\rho=\beta+i\gamma$ of $\zeta(s)$ with $T_*<\gamma\le T_*+1$
does not exceed $O(\log T_*)$. We also have
$$
\big|\Sigma_2(T_*/2\pi\xi)-\Sigma_2(T/2\pi\xi)\big|\le
\ssum{T_*/2\pi\xi<n\le (T_*+1)/2\pi\xi}\big|\Lambda(n)\e(-n\xi)\big|\,\mathop{\ll}\limits_\xi\,\log T_*.
$$
Combining the preceding two bounds and \eqref{eq:Sigsum},
it follows that
$$
\Sigma_1(T_*)+\Sigma_2(T_*/2\pi\xi)
\,\mathop{\ll}\limits_\xi\,T_*^{1/2}\log^2T_*.
$$

From now on, we assume that $T$ satisfies \eqref{eq:LD-horiz}.
In particular, $T$ is not the ordinate of a zero of $\zeta(s)$.
Put $c\defeq 1+\frac{1}{\log T}$,
and let $\cC$ be the rectangle in $\C$:
$$
c+i
~~\longrightarrow~~c+iT
~~\longrightarrow~~-1+iT
~~\longrightarrow~~-1+i
~~\longrightarrow~~c+i.
$$
By Cauchy's theorem, we have
\dalign{
\Sigma_1(T)
&=\frac{1}{2\pi i}\bigg(\int_{c+i}^{c+iT}
+\int_{c+iT}^{-1+iT}
+\int_{-1+iT}^{-1+i}
+\int_{-1+i}^{c+i}\bigg)
\frac{\zeta'}{\zeta}(s)\,\xi^{-s}\cX(1-s)\,ds\\
&=I_1+I_2+I_3+I_4\quad\text{(say)}.
}
We estimate the four integrals separately.

Since
$$
\frac{\zeta'}{\zeta}(s)=-\sum_n\frac{\Lambda(n)}{n^s}
\qquad(\sigma>1),
$$
it follows that
$$
I_1=-\sum_{n}\Lambda(n)\cdot\frac{1}{2\pi i}
\int_{c+i}^{c+iT}(n\xi)^{-s}\cX(1-s)\,ds.
$$
Applying Lemma~\ref{lem:ConGhoGon}, we get that
$$
I_1=-\sum_{n\xi\le T/2\pi}\Lambda(n)\e(-n\xi)
+\cc{O\bigg(}\xi^{-c}\sum_n\frac{\Lambda(n)}{n^c}T^{c-1/2}
+\xi^{-c}\sum_n\frac{\Lambda(n)}{n^c}
\frac{T^{c+1/2}}{|T-2\pi n\xi|+T^{1/2}}\cc{\bigg)}.
$$
The first sum is $\Sigma_2(T/2\pi\xi)$; hence, defining
$T_\circ\defeq T/2\pi\xi$ and noting that
\be\label{eq:brilliant}
\sum_n\frac{\Lambda(n)}{n^c}
=-\frac{\zeta'}{\zeta}(c)\ll \frac{1}{c-1}=\log T,
\ee
we see that
\be\label{eq:fantasy1}
I_1+\Sigma_2(T/2\pi\xi)\,\mathop{\ll}\limits_\xi\,
T^{1/2}\log T
+T^{3/2}\sum_{n\ge 2}\frac{\Lambda(n)}{n^c}
\frac{1}{|n-T_\circ|+T^{1/2}}.
\ee
For the last sum, we split the integers $n\ge 2$ into
three disjoint sets:
\dalign{
S_1&\defeq\{n\ge 2:|n-T_\circ|>\tfrac12T_\circ\},\\
S_2&\defeq\{n\ge 2:|n-T_\circ|\le T_\circ^{1/2}\},\\
S_3&\defeq\{n\ge 2:T_\circ^{1/2}<|n-T_\circ|\le \tfrac12T_\circ\}.
}
Since $|n-T_\circ|+T^{1/2}\gg_\xi T$
for each $n\in S_1$, we have by \eqref{eq:brilliant}:
\be\label{eq:fantasy2}
\sum_{n\in S_1}\frac{\Lambda(n)}{n^c}\frac{1}{|n-T_\circ|+T^{1/2}}
\,\mathop{\ll}\limits_\xi\,\frac{\log T}{T},
\ee
which is acceptable. For each $n\in S_2$, we have
$$
n\asymp T_\circ\,\mathop{\asymp}\limits_\xi\, T,\qquad
\frac{\Lambda(n)}{n^c}
\,\mathop{\ll}\limits_\xi\,\frac{\log T}{T^c}
\asymp\frac{\log T}{T},\qquad
\frac{1}{|n-T_\circ|+T^{1/2}}
\,\mathop{\ll}\limits_\xi\,T^{-1/2}.
$$
As $|S_2|\ll T^{1/2}$, it follows that
\be\label{eq:fantasy3}
\sum_{n\in S_2}\frac{\Lambda(n)}{n^c}\frac{1}{|n-T_\circ|+T^{1/2}}
\,\mathop{\ll}\limits_\xi\,\frac{\log T}{T}.
\ee
Finally, for each $n\in S_3$, we again have
$n\asymp T_\circ\,\mathop{\asymp}\limits_\xi\,T$, and so
$$
\sum_{n\in S_3}\frac{\Lambda(n)}{n^c}\frac{1}{|n-T_\circ|+T^{1/2}}
\,\mathop{\ll}\limits_\xi\,\frac{\log T}{T}
\sum_{\cc{n\in S_3}}\frac{1}{|n-T_\circ|+T^{1/2}}.
$$
The last sum is bounded by
$$
\ll\sum_{T_\circ^{1/2}\le k\le \frac12T_\circ}
\ssum{n\ge 1\\k<|n-T_\circ|\le k+1}\frac{1}{|n-T_\circ|+T^{1/2}}
\ll\sum_{T_\circ^{1/2}\le k\le \frac12T_\circ}\frac{1}{k}
\,\mathop{\ll}\limits_\xi\,\log T,
$$
hence it follows that
\be\label{eq:fantasy4}
\sum_{n\in S_3}\frac{\Lambda(n)}{n^c}\frac{1}{|n-T_\circ|+T^{1/2}}
\,\mathop{\ll}\limits_\xi\,\frac{\log^2T}{T}.
\ee
Combining the bounds \eqref{eq:fantasy1}$-$\eqref{eq:fantasy4},
we have therefore shown that
\be\label{eq:I1est}
I_1=-\Sigma_2(T/2\pi\xi)+O_\xi(T^{1/2}\log^2T).
\ee

Next, observe that \eqref{eq:X1-s} yields the uniform bound
$$
\cX(1-(\sigma+iT))\ll T^{1/2}\qquad(-1\le\sigma\le c).
$$
Recalling \eqref{eq:LD-horiz}, it follows that
\be\label{eq:I2est}
I_2=-\frac{1}{2\pi i}\int_{-1+iT}^{c+iT}
\frac{\zeta'}{\zeta}(s)\,\xi^{-s}\cX(1-s)\,ds
\,\mathop{\ll}\limits_\xi\,T^{1/2}\log^2T.
\ee
Similarly, using \eqref{eq:cXs} we derive that
$$
\cX(2-it)\ll t^{-3/2}\qquad(1\le t\le T).
$$
Moreover, according to \cite[Lemma~12.4]{MontVau},
the following bound holds:
$$
\frac{\zeta'}{\zeta}(-1+it)\ll\log 2t\qquad(1\le t\le T).
$$
Therefore,
\be\label{eq:I3est}
I_3=-\frac{1}{2\pi i}\int_{-1+i}^{-1+iT}
\frac{\zeta'}{\zeta}(s)\,\xi^{-s}\cX(1-s)\,ds
\ll\xi^{-1}\int_1^T t^{-3/2}\log 2t\,dt
\,\mathop{\ll}\limits_\xi\,1.
\ee
Finally, it is clear that
\be\label{eq:I4est}
I_4=\frac{1}{2\pi i}\int_{1-c+i}^{c+i}
\frac{\zeta'}{\zeta}(s)\,\xi^{-s}\cX(1-s)\,ds
\,\mathop{\ll}\limits_\xi\,1.
\ee

Combining \eqref{eq:I1est}$-$\eqref{eq:I4est}, we obtain
\eqref{eq:Sigsum}, and the proof is complete.
\end{proof}

\begin{corollary}\label{cor:vonMangoldt-twist}
For any $\xi\in\R^+$, $\euB\in C_c^\infty(\R^+)$, and $X>100\xi^{-1}$,
we have
\be\label{eq:yelp}
\ssum{\rho=\beta+i\gamma}\xi^{-\rho}
\cX(1-\rho)\euB\Big(\frac{\gamma}{2\pi\xi X}\Big)
+\sum_n\Lambda(n)\e(-n\xi)\euB(n/X)
\,\mathop{\ll}\limits_{\xi,\euB}\,X^{1/2}\log^2X.
\ee
\end{corollary}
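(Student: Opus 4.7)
The plan is to deduce \eqref{eq:yelp} from Theorem~\ref{thm:vonMangoldt-twist} by Riemann--Stieltjes integration by parts. With $\Sigma_1, \Sigma_2$ as in the proof of Theorem~\ref{thm:vonMangoldt-twist}, set
$$
f(T) \defeq \Sigma_1(T) + \Sigma_2\!\left(\frac{T}{2\pi\xi}\right).
$$
Theorem~\ref{thm:vonMangoldt-twist} yields $|f(T)| \ll_\xi T^{1/2}\log^2 T$ for $T \ge 100$; since both summands are trivially bounded for smaller $T$, this bound extends to all $T \ge 2$ with an adjusted implied constant.

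Next, the scales in \eqref{eq:yelp} align under the identifications $T = \gamma$ in the zero sum and $T = 2\pi\xi n$ in the von Mangoldt sum: $\Sigma_1(T)$ is a step function whose jump at $T = \gamma > 0$ equals $\xi^{-\rho}\cX(1-\rho)$, and $\Sigma_2(T/(2\pi\xi))$ is a step function whose jump at $T = 2\pi\xi n$ equals $\Lambda(n)\e(-n\xi)$. In both cases the argument of $\euB$ becomes $T/(2\pi\xi X)$, so the left-hand side of \eqref{eq:yelp} equals the single Stieltjes integral
$$
\int_0^\infty \euB\!\left(\frac{T}{2\pi\xi X}\right) df(T) \;=\; -\frac{1}{2\pi\xi X}\int_0^\infty \euB'\!\left(\frac{T}{2\pi\xi X}\right) f(T)\,dT,
$$
where the second equality uses integration by parts together with the compact support of $\euB$ in $\R^+$, which kills the boundary terms.

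It remains to estimate the last integral. If $\euB$ is supported in $[a,b] \subset \R^+$, the integrand is supported on $T \in [2\pi\xi X a, 2\pi\xi X b]$, an interval of length $\asymp_{\xi,\euB} X$ on which $T \asymp_{\xi,\euB} X$. The hypothesis $X > 100\xi^{-1}$ (together with the fixed lower endpoint $a$ of the support of $\euB$) ensures that $T \ge 2$ throughout this range, so the bound from Theorem~\ref{thm:vonMangoldt-twist} gives $|f(T)| \ll_{\xi,\euB} X^{1/2}\log^2 X$ uniformly on the support. Combining with $|\euB'| \ll_\euB 1$ and multiplying by the length of the support and the factor $(2\pi\xi X)^{-1}$ produces the claimed bound $\ll_{\xi,\euB} X^{1/2}\log^2 X$. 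No substantive obstacle arises; the argument is a routine Abel summation, with the only mild bookkeeping point being the verification that the bound from Theorem~\ref{thm:vonMangoldt-twist} applies throughout the support of the integrand.
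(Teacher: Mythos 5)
Your argument is correct and is essentially the paper's own proof: the paper likewise expresses both sums as Riemann--Stieltjes integrals, integrates by parts (killing boundary terms via the compact support of $\euB$), and applies Theorem~\ref{thm:vonMangoldt-twist} uniformly on the support, the only cosmetic difference being that you merge the two sums into a single integral before integrating by parts rather than after. The one bookkeeping point worth noting is that for a support $[a,b]$ with $a$ very small the condition $X>100\xi^{-1}$ does not force $T=2\pi\xi Xa\ge 2$, but the range $T=O(1)$ contributes only $O_{\xi,\euB}(1)$ to the integral, so the estimate stands (the paper glosses over the same point).
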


\begin{proof}
Denote
$$
F_1(u)\defeq\ssum{\rho=\beta+i\gamma\\0<\gamma\le u}
\xi^{-\rho}\cX(1-\rho),\qquad
F_2(u)\defeq\ssum{n\le u}\Lambda(n)\e(-n\xi).
$$
Applying Theorem~\ref{thm:vonMangoldt-twist} with $T\defeq 2\pi\xi Xu$,
we have the uniform bound
\be\label{eq:lawnmower}
F_1(2\pi\xi Xu)+F_2(Xu)\,\mathop{\ll}\limits_\xi\,(Xu)^{1/2}\log^2(Xu)
\qquad(u\ge\tfrac12,~X>100\xi^{-1}).
\ee

Using Riemann-Stieltjes integration, we have
\dalign{
\ssum{\rho=\beta+i\gamma}\xi^{-\rho}
\cX(1-\rho)\euB\Big(\frac{\gamma}{2\pi\xi X}\Big)
&=\int_0^\infty\euB\Big(\frac{u}{2\pi\xi X}\Big)\,dF_1(u)
=\int_0^\infty\euB(u)\,dF_1(2\pi\xi Xu)\\
&=-\int_0^\infty\euB'(u)F_1(2\pi\xi Xu)\,du,
}
and
\dalign{
\sum_n\Lambda(n)\e(-n\xi)\euB(n/X)
&=\int_0^\infty\euB(u/X)\,dF_2(u)
=\int_0^\infty\euB(u)\,dF_2(Xu)\\
&=-\int_0^\infty\euB'(u)F_2(Xu)\,du,
}
and so the left side of \eqref{eq:yelp} is equal to
$$
-\int_0^\infty\euB'(u)\Big\{F_1(2\pi\xi Xu)+F_2(Xu)\Big\}\,du.
$$
Using \eqref{eq:lawnmower}, the result follows immediately.
\end{proof}

\section{Proof of Theorem~\ref{thm:RHvsGRH}}

First, we show $(i)\Longrightarrow(ii)$. Assume GRH holds for all
Dirichlet $L$-functions. Since $\zeta(s)$ is one such $L$-function,
RH is true.

Next, suppose we are given a rational number
$\xi\defeq m/q$ with $0<m<q$ and $(m,q)=1$, a function
$\euB\in C_c^\infty(\R^+)$, and $\eps>0$. By Lemma~\ref{lem:green},
\be\label{eq:drum}
\sum_n\Lambda(n)\e(-n\xi)\euB(n/X)
=\frac{\mu(q)}{\phi(q)}C_\euB X
+O_{\xi,\euB,\eps}(X^{1/2+\eps}),
\ee
and Corollary~\ref{cor:vonMangoldt-twist} shows that
\be\label{eq:stick}
\ssum{\rho=\frac12+i\gamma}\xi^{-\rho}
\cX(1-\rho)\euB\Big(\frac{\gamma}{2\pi\xi X}\Big)
+\sum_n\Lambda(n)\e(-n\xi)\euB(n/X)
\,\mathop{\ll}\limits_{\xi,\euB}\,X^{1/2}\log^2X.
\ee
Combining these results, we deduce the bound \eqref{eq:superbound},
and this completes our proof that $(i)\Longrightarrow(ii)$.

To prove the reverse implication $(ii)\Longrightarrow(i)$, assume RH
and the bound \eqref{eq:superbound} for all triples
$(\xi,\euB,\eps)$ as above. Applying
Corollary~\ref{cor:vonMangoldt-twist} again, which yields the
bound \eqref{eq:stick}, and using \eqref{eq:superbound},
we deduce that \eqref{eq:drum} again holds.

According to Lemma~\ref{lem:ultraclean}, to establish GRH, it suffices
to show that for any primitive character $\chi$ modulo $q>1$,
any $\euB\in C_c^\infty(\R^+)$, and $\eps>0$, we have
\be\label{eq:goalbd}
\sum_n\Lambda(n)\chi(n)\euB(n/X)
\,\mathop{\ll}\limits_{q,\euB,\eps}\, X^{1/2+\eps}.
\ee
For this, we invoke the following identity
(see, e.g., Bump~\cite[Chapter 1]{Bump}):
\be\label{eq:chi-basic}
\chi(n)=\frac{\chi(-1)\tau(\chi)}{q}
\sum_{m\bmod q}\overline\chi(m)\e(-nm/q)
\qquad(n\in\Z),
\ee
where the sum runs over any complete set of reduced residue
classes $m\bmod q$, and
$\tau(\chi)$ is the Gauss sum given by
$$
\tau(\chi)\defeq\sum_{n\bmod q}\chi(n)\e(n/q).
$$
Using \eqref{eq:drum} and \eqref{eq:chi-basic}, we get that
\dalign{
\sum_n\Lambda(n)\chi(n)\euB(n/X)
&=\frac{\chi(-1)\tau(\chi)}{q}\ssum{0<m<q\\(m,q)=1}\overline\chi(m)
\sum_n\Lambda(n)\e(-nm/q)\euB(n/X)\\
&=\frac{\chi(-1)\tau(\chi)}{q}\ssum{0<m<q\\(m,q)=1}\overline\chi(m)
\bigg(\frac{\mu(q)}{\phi(q)}C_\euB X
+O_{\xi,\euB,\eps}(X^{1/2+\eps})\bigg)\\
&=\frac{\chi(-1)\tau(\chi)}{q}\frac{\mu(q)}{\phi(q)}C_\euB X
\ssum{0<m<q\\(m,q)=1}\overline\chi(m)
+O_{q,\euB,\eps}(X^{1/2+\eps}).
}
Since $\chi$ is primitive and $q>1$, it follows that $\chi$
is non-principal, and thus
$$
\ssum{0<m<q\\(m,q)=1}\overline\chi(m)=0.
$$
Consequently, the first expression in the preceding estimate vanishes,
and we obtain \eqref{eq:goalbd} as required. 
This completes our proof that $(ii)\Longrightarrow(i)$.

\bigskip\subsection{Acknowledgment}
The author thanks Masatoshi Suzuki for pointing out the
relevant literature connecting Theorem~\ref{thm:RHvsGRH} to the
Linnik–Sprind\v zuk theorem.


\begin{thebibliography}{99}

\bibitem{Banks}
W.~Banks,
Towards the Generalized Riemann Hypothesis
using only zeros of the Riemann zeta function.
\emph{Res.\ Number Theory} 9 (2023), no.~1, Paper No.~19.

\bibitem{Bump}
D.~Bump,
\emph{Automorphic forms and representations.}
Cambridge Studies in Advanced Mathematics, 55. Cambridge University Press, Cambridge, 1997.

\bibitem{ConGhoGon}
J.~B.~Conrey, A.~Ghosh and S.~M.~Gonek,
Simple zeros of the Riemann zeta-function.
\emph{Proc.\ London Math.\ Soc.} (3) 76 (1998), no.~3, 497--522.

\bibitem{Fujii1}
A.~Fujii,
Zeta zeros and Dirichlet L-functions.
\emph{Proc.\ Japan Acad.\ Ser.\ A Math.\ Sci.} 64 (1988),
no.~6, 215--218.

\bibitem{Fujii2}
A.~Fujii,
Zeta zeros and Dirichlet L-functions.\ II.
\emph{Proc.\ Japan Acad.\ Ser.\ A Math.\ Sci.} 64 (1988),
no.~8, 296--299.

\bibitem{Fujii3}
A.~Fujii,
Some observations concerning the distribution of the zeros of the zeta functions. III.
\emph{Proc.\ Japan Acad.\ Ser.\ A Math.\ Sci.} 68 (1992),
no.~5, 105--110.

\bibitem{Gonek}
S.~M.~Gonek,
Mean values of the Riemann zeta function and its derivatives.
\emph{Invent.\ Math.} 75 (1984), no.~1, 123--141.

\bibitem{KacPer}
J.~Kaczorowski and A.~Perelli,
On the Linnik-Sprind\v zuk theorem about the zeros of $L$-functions.
\emph{Acta Arith.} 133 (2008), no.~1, 83--96.

\bibitem{Linnik}
Yu.~V.~Linnik,
On the expression of $L$-series by means of $\zeta$-functions. 
(Russian)
\emph{Doklady Akad.\ Nauk SSSR (N.S.)} 57, (1947), 435--437.

\bibitem{MontVau}
H.~L.~Montgomery and R.~C.~Vaughan,
\emph{Multiplicative number theory. I. Classical theory.}
Cambridge Studies in Advanced Mathematics, 97.
Cambridge University Press, Cambridge, 2007.

\bibitem{Sprind1}
V.~G.~Sprind\v zuk,
The vertical distribution of zeros of the zeta-function and the 
extended Riemann hypothesis. (Russian)
\emph{Acta Arith.} 27 (1975), 317--332.

\bibitem{Sprind2}
V.~G.~Sprind\v zuk,
On extended Riemann hypothesis.
Topics in number theory (Proc. Colloq., Debrecen, 1974), 
369--372.
\emph{Colloq.\ Math.\ Soc.\ J\'anos Bolyai}, Vol.~13,
North-Holland, Amsterdam, 1976.

\bibitem{Suzuki}
M.~Suzuki,
A relation between the zeros of an $L$-function belonging to the
Selberg class and the zeros of an associated $L$-function twisted
by a Dirichlet character.
\emph{Arch.\ Math.\ (Basel)} 83 (2004), no.~6, 514--527.

\end{thebibliography}
\end{document}